\titleformat{\section}[block]{\scshape\filcenter\Large}{\thesection.}{.5em}{}
\titleformat{\subsection}[block]{\bfseries\filcenter\large}{\thesubsection.}{.5em}{\medskip}
\titleformat{\subsubsection}[runin]{\bfseries}{\thesubsubsection.}{.5em}{}[.]
\titlespacing{\subsubsection}{0pt}{10pt}{.5em}
\newtheoremstyle{ntheorem}%
	{\topsep}{\topsep}{\itshape}{0pt}{\bfseries}{.}{.5em}%
	{\thmnumber{#2.\hspace{.5em}}\thmname{#1}\thmnote{ (#3)}}
\newtheoremstyle{ndefinition}%
	{\topsep}{\topsep}{\normalfont}{0pt}{\bfseries}{.}{.5em}%
	{\thmnumber{#2.\hspace{.5em}}\thmname{#1}\thmnote{ (#3)}}
\newtheoremstyle{nremark}%
	{\topsep}{\topsep}{\normalfont}{0pt}{\itshape}{.}{.5em}%
	{\thmnumber{}\thmname{#1}\thmnote{ (#3)}}
\theoremstyle{ntheorem}
  	\newtheorem{theorem}[subsubsection]{Theorem}
  	\newtheorem{proposition}[subsubsection]{Proposition}
	\newtheorem{lemma}[subsubsection]{Lemma}
\theoremstyle{ndefinition}
	\newtheorem{example}[subsubsection]{Example}
         \newtheorem{remark}[subsubsection]{Remark}
	\edef\Drop@@{%
		\dimen@=#1\relax
		\dimen@=.5\dimen@
		\A@=-\sinDirection\dimen@
		\B@=\cosDirection\dimen@
		\setboxz@h{%
			\setbox2=\hbox{\kern3\A@\raise3\B@\copy\z@}%
			\dp2=\z@ \ht2=\z@ \wd2=\z@ \box2
			\setbox2=\hbox{\kern\A@\raise\B@\copy\z@}%
			\dp2=\z@ \ht2=\z@ \wd2=\z@ \box2
			\setbox2=\hbox{\kern-\A@\raise-\B@\copy\z@}%
			\dp2=\z@ \ht2=\z@ \wd2=\z@ \box2
			\setbox2=\hbox{\kern-3\A@\raise-3\B@ \noexpand\boxz@}%
			\dp2=\z@ \ht2=\z@ \wd2=\z@ \box2
		}%
		\ht\z@=\z@ \dp\z@=\z@ \wd\z@=\z@ \noexpand\styledboxz@
	}%
\xydef@\Tttip@{\kern2pt \vrule height2pt depth2pt width\z@
	\Tttip@@ \kern2pt \egroup
	\U@c=0pt \D@c=0pt \L@c=0pt \R@c=0pt \Edge@c={\circleEdge}%
	\def\Leftness@{.5}\def\Upness@{.5}%
	\def\Drop@@{\styledboxz@}\def\Connect@@{\straight@{\dottedSpread@\jot}}}
\xydef@\Tttip@@{%
	\dimen@=.25\dimen@
 	\B@=\cosDirection\dimen@
	\setboxz@h\bgroup\reverseDirection@\line@ \wdz@=\z@ \ht\z@=\z@ \dp\z@=\z@
	{\vDirection@(1,-1)\xydashl@ \xyatipfont\char\DirectionChar}%
	{\vDirection@(1,+1)\xydashl@ \xybtipfont\char\DirectionChar}%
}
\xydef@\ar@form{
	\ifx \space@\next \expandafter\DN@\space{\xyFN@\ar@form}%
	\else\ifx ^\next \DN@ ^{\xyFN@\ar@style}\edef\arvariant@@{\string^}%
	\else\ifx _\next \DN@ _{\xyFN@\ar@style}\edef\arvariant@@{\string_}%
	\else\ifx 0\next \DN@ 0{\xyFN@\ar@style}\def\arvariant@@{0}%
	\else\ifx 1\next \DN@ 1{\xyFN@\ar@style}\def\arvariant@@{1}%
	\else\ifx 2\next \DN@ 2{\xyFN@\ar@style}\def\arvariant@@{2}%
	\else\ifx 3\next \DN@ 3{\xyFN@\ar@style}\def\arvariant@@{3}%
	\else\ifx 4\next \DN@ 4{\xyFN@\ar@style}\def\arvariant@@{4}%
	\else\ifx \bgroup\next \let\next@=\ar@style
	\else\ifx [\next \DN@[##1]{\ar@modifiers{[##1]}}
	\else\ifx *\next \DN@ *{\ar@modifiers}%
	\else\addLT@\ifx\next \let\next@=\ar@slide
	\else\ifx /\next \let\next@=\ar@curveslash
	\else\ifx (\next \let\next@=\ar@curveinout 
	\else\addRQ@\ifx\next \addRQ@\DN@{\ar@curve@}%
	\else\addLQ@\ifx\next \addLQ@\DN@{\xyFN@\ar@curve}%
	\else\addDASH@\ifx\next \addDASH@\DN@{\defarstem@-\xyFN@\ar@}%
	\else\addEQ@\ifx\next \addEQ@\DN@{\def\arvariant@@{2}\defarstem@-\xyFN@\ar@}%
	\else\addDOT@\ifx\next \addDOT@\DN@{\defarstem@.\xyFN@\ar@}%
	\else\ifx :\next \DN@:{\def\arvariant@@{2}\defarstem@.\xyFN@\ar@}%
	\else\ifx ~\next \DN@~{\defarstem@~\xyFN@\ar@}%
	\else\ifx !\next \DN@!{\dasharstem@\xyFN@\ar@}%
	\else\ifx ?\next \DN@?{\ar@upsidedown\xyFN@\ar@}%
	\else \let\next@=\ar@error
	\fi\fi\fi\fi\fi\fi\fi\fi\fi\fi\fi\fi\fi\fi\fi\fi\fi\fi\fi\fi\fi\fi\fi \next@}
\newcommand{\fl}{\to}
\newcommand{\dfl}{\Rightarrow}
\newcommand{\qfl}{\xymatrix@1@C=10pt{\ar@4 [r] &}}
\newcommand{\opfl}[1]{\xymatrix @C=1.5em {\strut \ar@{->>} [r] ^-{#1} & \strut}}
\newcommand{\odfl}[1]{\xymatrix @C=1.5em {\strut \ar@2 [r] ^-{#1} & \strut}}
\newcommand{\otfl}[1]{\xymatrix @C=1.5em {\strut \ar@3 [r] ^-{#1} & \strut}}
\newcommand{\oqfl}[1]{\xymatrix @C=1.5em {\strut \ar@4 [r] ^*+{#1} & \strut}}
\newcommand{\ie}{\emph{i.e.}}
\renewcommand{\phi}{\varphi}
\renewcommand{\epsilon}{\varepsilon}
\definecolor{orange}{rgb}{1,0.55,0}
\definecolor{vert}{rgb}{0,0.45,0}
\newcommand{\ifthen}[2]{\ifthenelse{#1}{#2}{}}
\renewcommand{\fl}{\rightarrow}
\renewcommand{\to}{\longrightarrow}
\DeclareMathOperator{\Path}{Path}
\def\P{\mathbf{P}}
\begin{document}

\thispagestyle{empty}

\begin{center}

\begin{Large}\begin{uppercase}
{Finite convergent presentations of plactic monoids for semisimple Lie algebras \footnote{This work is partially supported by the French National Research Agency, ANR-13-BS02-0005-02.}}
\end{uppercase}\end{Large}

\bigskip\hrule height 1.5pt \bigskip

\begin{large}\begin{uppercase}
{Nohra Hage}
\end{uppercase}\end{large}
\vspace{2cm}

\begin{small}\begin{minipage}{14cm}
\noindent\textbf{Abstract --} We study rewriting properties of  the column presentation of plactic monoid for any semisimple Lie algebra such as termination and confluence. Littelmann described this presentation using L-S paths generators. Thanks to the shapes of  tableaux,  we show that this presentation is finite and convergent. We obtain as a corollary that plactic monoids for any semisimple Lie algebra satisfy homological finiteness properties.

\bigskip\noindent\textbf{Keywords --} Plactic algebra; Littelmann path model; standard tableau;  convergent presentations.
\smallskip\noindent\textbf{M.S.C. 2010 -- } 16S15, 68Q42, 20M05, 68R05, 06B15
\end{minipage}\end{small}
\end{center}

%
%
%
%
%

\section{Introduction}
Using his path model, Littelmann defined in~\cite{Littelmann96} a plactic algebra for any semisimple Lie algebra. As a consequence, he gave some presentations by generators and relations of the plactic algebra of types~A,~B,~C,~D and~$G_2$.  Using a case-by-case analysis, the plactic congruence can be also  defined using Kashiwara's theory of crystal bases~\cite{JimboMisraMiwaOkado90, Kashiwara91, KashiwaraNakashima94, Kashiwara94}.  Lascoux, Leclerc and Thibon presented in~\cite{LascouxLeclercThibon95} the plactic monoid of type~A using the theory of crystal bases and  gave a presentation of the plactic monoid of type~C without proof. Shortly after that, Lecouvey in~\cite{Lecouvey02} and Baker in~\cite{Baker00} described independently the plactic monoid of type~C using also Kashiwara's theory of crystal bases. Plactic monoids of types~B,~D and~$G_2$ were introduced by Lecouvey, see~\cite{Lecouvey07}. 

The plactic monoid of rank~$n$ introduced by Lascoux and Sch\"{u}tzenberger in~\cite{LascouxSchutzenberger81}, corresponds to the representations of the general linear Lie algebra of $n$ by $n$ matrices.  This Lie algebra is of type~A, so the corresponding plactic monoid is known as the plactic monoid of type~A, and denoted by~$\P_n(A)$. 

The monoid~$\P_n(A)$  has found several applications in algebraic combinatorics and representation theory~\cite{LascouxSchutzenberger81,LascouxLeclercThibon95,Fulton97,Lothaire02}. Similarly, plactic monoids for any types have many applications like the combinatorial description of the Kostka-Foulkes polynomials which arise as entries of the character table of the finite reductive groups~\cite{Lecouvey06}.
\medskip

More recently, plactic monoids were investigated by rewriting methods. In~\cite{KubatOkninski14},  Kubat and Okninski showed that for $n>3$, there is no finite completion of the presentation of the  monoid~$\P_n(A)$  with the Knuth generators. Bokut et al. in~\cite{BokutChenChenLi15}  and Cain et al. in~\cite{CainGrayMalheiro15} constructed independently finite convergent presentation of the monoid~$\P_n(A)$ by adding columns generators. This presentation is called the \emph{column presentation}. Having a finite convergent presentation of the monoid~$\P_n(A)$ had many consequences. In particular, the column presentation was used by Lopatkin in~\cite{Lopatkin14} to construct Anick's resolution for the monoid~$\P_n(A)$ and by the author and Malbos in~\cite{HageMalbos14} to construct coherent presentations of it. Note that coherent presentation extends the notion of a presentation of the monoid by homotopy generators taking into account the relations among the relations. 
Using Kashiwara's theory of crystal bases, the author generalized the column presentation for type~A and  constructed a finite convergent presentation  of  plactic monoid of type~C by adding admissible column generators~\cite{Hage14}. A bit later, a similar column presentation was obtained by Cain et al. for plactic monoids of types~B,~C,~D and~$G_2$~\cite{CainGrayMalheiro14}.
\medskip

In the present work, we consider plactic monoids  for any semisimple Lie algebra. The column presentation of these monoids was introduced by  Littelmann in~\cite{Littelmann96} using  L-S paths. An L-S path corresponds  to a column for type~A, and to an admissible column for types~C,~B,~D and $G_2$ in the Lecouvey sense, see~\cite{Lecouvey07}.   We study the column presentation using rewriting methods. For this, we consider a rewriting system where the generating set contains  the finite set of L-S paths. The right-hand sides of rewriting rules are standard tableaux. The rewriting system rewrites two L-S paths that their concatenation do not form a standard tableau to their corresponding standard tableau form. Using the shapes of tableaux, we show that this presentation is finite and convergent.  As a consequence,  we deduce that plactic monoids for any types satisfy  some homological finiteness properties.

Note that the convergent column presentation of  plactic monoids for any semisimple Lie algebra given in this paper using L-S paths coincides with the presentations constructed for type~A in~\cite{CainGrayMalheiro15,BokutChenChenLi15}, for type~C in~\cite{Hage14} and for types~B,~C,~D and $G_2$ in~\cite{CainGrayMalheiro14}.
\medskip

The paper is organised as follows. In Section~\ref{placticalgebra},  we recall the definitions of paths, root operators and L-S paths.  After we present the definition and  some properties of standard tableaux as defined by Littelmann and we recall the  definition of the plactic algebra for any semisimple Lie algebra. In Section~\ref{finiteconvergentpresentation}, we recall some rewriting properties of the presentations of monoids in term of polygraphs. After,  we show that the column  presentation of the plactic monoid for any semisimple Lie algebra  is finite and convergent.

\section{Plactic algebra}
\label{placticalgebra}

\subsection{L-S Paths}
In this section, we recall the definitions and properties of paths, root operators and L-S paths.  We refer the reader to~\cite{Littelmann94,Littelmann95,Littelmann96} for a full introduction.

\subsubsection{Root system}
Let $\mathfrak{g}$ be a semisimple Lie algebra. Let $V$ be the finite vector space with standard inner product $\langle \cdot, \cdot \rangle $ spanned by $\Phi\subset V\setminus \{0\}$ the root system of~$\mathfrak{g}$.  Let $\Phi^{+}$ be the set of its positive roots. A positive root $\alpha$ in $\Phi^+$ is \emph{simple}, if it can not be written as $\alpha_{1}+\alpha_{2}$, where $\alpha_{1},\alpha_{2}\in\Phi^+$. 
For a root $\alpha$, define by
$\alpha^\vee := {2\alpha\over \langle\alpha,\alpha\rangle}\,$  its \emph{coroot}. Denote by  
\[
X
\: = \:
\big\{\; v\in V \; \big| \; \langle  v, \alpha^{\vee}_i\rangle \in \mathbb{Z}, \text{ for all } i \;\big\}
\]
the weight lattice of the Lie algebra $\mathfrak{g}$ and set $X_{\mathbb{R}} := X\otimes_{\mathbb{Z}} \mathbb{R}$. A \emph{fundamental weight} $\omega_i$ in $X$  satisfy
$\langle \omega_i, \alpha_{j}^{\vee}\rangle = \delta_{ij}$,
for all $i$ and $j$. 
The set of dominant weights is   
\[X^{+} \:=\: \big\{\; \lambda\in X \;\big|\; \langle \lambda,\alpha^{\vee}_{i}\rangle \geq 0, \text{ for all } i  \;\big\}\]
and the dominant chamber is
\[X^{+}_{\mathbb{R}} \:=\: \big\{\; \lambda\in X_{\mathbb{R}} \;\big|\; \langle \lambda,\alpha^{\vee}_{i}\rangle \geq 0, \text{ for all } i \;\big\}.\]

\begin{example}
Let $\mathfrak{g} = \mathfrak{sl}_{3}$. Consider $V= \big\{\;(x_1,x_2,x_3)\in\mathbb{R}^{3}\;\big|\; x_1+x_2+x_3 = 0\;\big\}$ and let $\{\epsilon_{1},\epsilon_{2},\epsilon_{3}\}$ be the canonical basis of $\mathbb{R}^{3}$. The simple roots of $\mathfrak{g}$ are $\alpha_{1} = \epsilon_{1}-\epsilon_{2}$ and $\alpha_{2}= \epsilon_{2}-\epsilon_{3}$. Its fundamental weights are $\omega_{1} = \epsilon_{1}$ and $\omega_{2}= \epsilon_{1}+ \epsilon_{2}$ (we still denote by $\epsilon_i$ the projection of $\epsilon_i$ into $V$). An example of dominant weight is $\omega_1 + \omega_2$. The dominant chamber is the hatched area on the following figure:
\begin{center}
\label{rootfigure}
\begin{tikzpicture}[scale=1]
\draw[-, dotted] (-2,1.73205080756) --(2,1.73205080756);
\draw[-, dotted] (-2,0) --(2,0);
\draw[-, dotted] (-2,0.86) --(2,0.86);
\draw[-, dotted] (-2,-0.86) --(2,-0.86);
\draw[->, dotted] (0,0) --(0.5, 0.86602540378);
\node[above] at (0.5, 0.86602540378){\small{$\varepsilon_{1} =\omega_{1}$}};
\draw[->,dotted] (0,0) --(-1, 0);
\draw[-,dotted] (1,-1.73205080756) --(-1, 1.73205080756);
\draw[-,dotted] (2,-1.73205080756) --(0, 1.73205080756);
\draw[-,dotted] (0,-1.73205080756) --(-2, 1.73205080756);
\node[left] at (-1, 0){\small{$\varepsilon_{2}$}};
\draw[->, dotted] (0,0) --(0.5, -0.86602540378);
\draw[-, dotted] (1,1.73205080756) --(-1, -1.73205080756);
\draw[-, dotted] (2,1.73205080756) --(0, -1.73205080756);
\draw[-, dotted] (0,1.73205080756) --(-2, -1.73205080756);
\node[below] at (0.5, -0.86602540378) {\small{$\varepsilon_{3}$}};
\draw[->, blue] (0,0) -- (1.5,0.86602540378);
\node[right] at (1.5,0.86602540378) {\small{$\alpha_{1}$}};
\node[above] at (0, 1.73205080756) {\small{$\omega_{1}+\omega_{2}$}}; 
\node[left] at (-0.5, 0.86602540378) {\small{$\omega_{2}$}};
\draw[->, red] (0,0) -- (-1.5,0.86602540378);
\node[left] at (-1.5,0.86602540378) {\small{$\alpha_{2}$}};
\node[below] at (0,0) {$0$};
\fill[red!20!, opacity = 0.5] (0,0) -- (1,1.73205080756) -- (-1,1.73205080756) -- cycle;
\end{tikzpicture}
\end{center}
\end{example}

\begin{example}
Let $\mathfrak{g} = \mathfrak{sp}_{4}$. Consider $V=\mathbb{R}^{2}$ with its canonical basis $\{\epsilon_{1},\epsilon_{2}\}$. The simple roots are $\alpha_{1} = \epsilon_{1}-\epsilon_{2}$ and $\alpha_{2}= 2\epsilon_{2}$. The fundamental weights are $\omega_{1} = \epsilon_{1}$ and $\omega_{2}= \epsilon_{1}+ \epsilon_{2}$. An example of  dominant weight is $\omega_1 + \omega_2$. The dominant chamber is the hatched area on the following figure:
\begin{center}
\label{rootfigure}
\begin{tikzpicture}[scale=1]
\draw[-, dotted] (-2,3) --(3,3);
\draw[-, dotted] (-2,0) --(3,0);
\draw[-, dotted] (-2,2) --(3,2);
\draw[-, dotted] (-2,1) --(3,1);
\draw[-, dotted] (-2,-1) --(3,-1);
\draw[-, dotted] (0,-1) --(0,3);
\draw[-, dotted] (1,-1) --(1,3);
\draw[-, dotted] (2,-1) --(2,3);
\draw[-, dotted] (-1,-1) --(-1,3);
\draw[-, dotted] (-1,-1) --(3,3);
\draw[-, dotted] (0,-1) --(3,2);
\draw[-, dotted] (1,-1) --(3,1);
\draw[-, dotted] (-2,-1) --(2,3);
\draw[-, dotted] (-2,0) --(1,3);
\draw[-, dotted] (-2,1) --(0,3);
\draw[-, dotted] (-2,3) --(2,-1);
\draw[-, dotted] (-2,1) --(0,-1);
\draw[-, dotted] (-2,2) --(1,-1);
\draw[-, dotted] (-1,3) --(3,-1);
\draw[-, dotted] (0,3) --(3,0);
\draw[-, dotted] (1,3) --(3,1);
\node[below] at (0,0) {$0$};
\node[right] at (1, 0){$\varepsilon_{2}$};
\draw[->, dotted] (0,0) --(1,0);
\draw[->, red] (0,0) -- (2,0);
\node[right] at (2,0) {\small{$\alpha_{2}$}};
\node[above] at (0, 1){\small{$\varepsilon_{1} =\omega_{1}$}};
\draw[->, dotted] (0,0) --(0,1);
\node[left] at (-1,1) {\small{$\alpha_{1}$}};
\draw[->, blue] (0,0) -- (-1,1);
\node[right] at (1, 1) {\small{$\omega_{2}$}};
\node[above] at (1, 2) {\small{$\omega_{1}+\omega_{2}$}};
\fill[red!20!, opacity = 0.5] (0,0) -- (0,3) --(3,3)-- cycle;
\end{tikzpicture}
\end{center}
\end{example}
For more informations, we refer the reader to~\cite{BourbakiLie4-6,Humphreys78}.

\subsubsection{Paths}
A \emph{path} is a piecewise linear, continuous  map $\pi:[0,1]\to X_{\mathbb{R}}$.  We will consider  paths up to a reparametrization, that is, the path $\pi$ is equal to any path $\pi\circ \varphi$, where $\varphi:[0,1]\to [0,1]$ is a piecewise linear, nondecreasing, surjective, continuous  map. The \emph{weight} of a path $\pi$ is $\textrm{wt}(\pi)=\pi(1)$.
For example, for the Lie algebra $\mathfrak{g}=\mathfrak{sl}_{n}$, the paths $\pi_{\epsilon_i}: t\mapsto t\epsilon_{i}$ are of weight $\epsilon_{i}$, for $i=1,\ldots,n$.
Denote by 
\[
\Pi= \big\{\; \pi: [0,1]\to X_{\mathbb{R}} \; \big| \; \pi(0) = 0 \text{ and } \pi(1)\in X \; \big\}
\]
the set of all paths with sources $0$ such that their weight lies in  $X$. 
Given two paths $\pi_1$ and $\pi_2$ in $\Pi$, the \emph{concatenation} of $\pi_1$ and $\pi_2$, denoted by $\pi:=\pi_1\star\pi_2$,  is defined by:
\[
\pi(t) :=\left \{
\begin{array}{ll}
\pi_{1}(2t)  &  \text{ for } 0\leq t\leq \frac{1}{2} \\
\pi_{1}(1)+\pi_{2}(2t-1)&   \text{ for }\frac{1}{2}\leq t\leq 1 \\
\end{array}
\right. 
\]
Denote by $\mathbb{Z}\Pi$ the \emph{algebra of paths} defined as the free $\mathbb{Z}$-module with basis $\Pi$ whose product is given by the concatenation of paths and where the unity is the trivial path
\[\begin{array}{rll}
\theta :& [0,1]&\to X_{\mathbb{R}}\\
&t&\mapsto 0.
\end{array}\]

\begin{example}
Let $\mathfrak{g}=\mathfrak{sl}_{3}$. Consider the paths $\pi_1: t\mapsto t\epsilon_{1}$ and $\pi_{2}:t\mapsto t\epsilon_{2}$. The path $\pi_{1}\star \pi_{2}$ is the path on the following figure:
\begin{center}
\label{rootfigure}
\begin{tikzpicture}[scale=1]
\draw[-, dotted] (-2,0) --(2,0);
\draw[-, dotted] (-2,0.86) --(2,0.86);
\draw[-, dotted] (-2,-0.86) --(2,-0.86);
\draw[->, green,very thick] (0,0) --(0.5, 0.86602540378);
\draw[->, green,very thick] (0.5, 0.86602540378) -- (-0.5, 0.86602540378);
\node[above] at (0.5, 0.86602540378){$\varepsilon_{1} =\omega_{1}$};
\draw[->,dotted] (0,0) --(-1, 0);
\draw[-,dotted] (1,-1.73205080756) --(-1, 1.73205080756);
\draw[-,dotted] (2,-1.73205080756) --(0, 1.73205080756);
\draw[-,dotted] (0,-1.73205080756) --(-2, 1.73205080756);
\node[left] at (-1, 0){$\varepsilon_{2}$};
\draw[->, dotted] (0,0) --(0.5, -0.86602540378);
\draw[-, dotted] (1,1.73205080756) --(-1, -1.73205080756);
\draw[-, dotted] (2,1.73205080756) --(0, -1.73205080756);
\draw[-, dotted] (0,1.73205080756) --(-2, -1.73205080756);
\node[below] at (0.5, -0.86602540378) {$\varepsilon_{3}$};
\node[left] at (-0.5, 0.86602540378) {\small{$\omega_{2}$}};
\node[below] at (0,0) {$0$};
\end{tikzpicture}
\end{center}
\end{example}

\subsubsection{Root operators}
For each simple root $\alpha$, one defines  root operators $e_{\alpha}, f_{\alpha}: \Pi \fl \Pi\cup\{0\}$ as follows. Every path $\pi$ in $\Pi$ is cutted into three parts, \ie, $\pi= \pi_{1}\star \pi_{2}\star \pi_{3}$. Then the new path $e_{\alpha}(\pi)$ or $f_{\alpha}(\pi)$ is either equal to $0$ or $\pi_{1}\star s_{\alpha}(\pi_{2})\star \pi_{3}$, where $s_{\alpha}$ denote the simple reflection with respect to the root $\alpha$. In other words, consider the function
\[
\begin{array}{rrl}
h_{\alpha}&:[0,1]&\to \mathbb{R}\\
 &t&\mapsto \langle \pi(t),\alpha^{\vee} \rangle
\end{array}\]
Let $Q:=\text{min}(Im( h_{\alpha})\cap \mathbb{Z})$ be the minimum attained by $h_{\alpha}$.  If $Q=0$, define $e_{\alpha}(\pi)= 0 $. If $Q<0$, let 
\[t_{1} = min\{\;t\in[0,1]\;\big|\; h_{\alpha}(t) = Q\;\}\]
and
\[t_{0} = \text{max}\{\;t<t_{1}\;\big|\; h_{\alpha}(t) = Q+1\;\}.\]
Denote by $\pi_{1}, \pi_{2}$ and $\pi_{3}$ the paths defined by 
\[
\begin{array}{rl}
\pi_{1}(t):=& \pi(tt_{0})\\
\pi_{2}(t):=&\pi(t_{0}+t(t_{1}-t_{0})) - \pi(t_{0})\\
\pi_{3}(t):=&\pi(t_{1}+t(1-t_{1}))-\pi(t_{1}), \text{ }\text{for}\text{ } t\in [0,1].
\end{array}
\]
By definition of the $\pi_{i}$, we have $\pi= \pi_{1}\star \pi_{2}\star \pi_{3}$. Then $e_{\alpha}(\pi) =  \pi_{1}\star s_{\alpha}(\pi_{2})\star \pi_{3}$.

Similarly, one can define the operator $f_{\alpha}$. Let
\[
p= \text{max}\{\;t\in [0,1]\;\big|\; h_{\alpha}(t) = Q\;\}.
\]
Denote by  $P$ the integral part of $h_{\alpha}(1)-Q$. If $P=0$, define $f_{\alpha}(\pi)=0$. If $P>0$, let $x>p$ such that 
\[
h_{\alpha}(x) = Q+1  \text{ } \text{ and } \text{ } Q<h_{\alpha}(x)< Q+1, \text{} \text{ for} \text{ } p<t<x.
\]
Denote by $\pi_{1}, \pi_{2}$ and $\pi_{3}$ the paths defined by 
\[
\begin{array}{rl}
\pi_{1}(t):=& \pi(tp)\\
\pi_{2}(t):=&\pi(p+t(x-p)) - \pi(p)\\
\pi_{3}(t):=&\pi(x+t(1-x))-\pi(x), \text{ }\text{for}\text{ } t\in [0,1].
\end{array}
\]
By definition of the $\pi_{i}$, we have $\pi= \pi_{1}\star \pi_{2}\star \pi_{3}$. Then $f_{\alpha}(\pi) =  \pi_{1}\star s_{\alpha}(\pi_{2})\star \pi_{3}$.
\medskip

These operators preserve the length of the paths. We have also that if $f_{\alpha}(\pi)=\pi^{'}\neq 0$ then $e_{\alpha}(\pi^{'})~=~\pi\neq 0$ and $\textrm{wt}(f_{\alpha}(\pi)) = \textrm{wt}(e_{\alpha}(\pi)) - \alpha$.
\medskip

For all simple root $\alpha$, let $\mathcal{A}$ be the subalgebra of $\text{End}_{\mathbb{Z}}(\mathbb{Z}\Pi)$ generated by the root operators $f_{\alpha}$ and $e_{\alpha}$. Define $\Pi^{+}$ to be the set of paths $\pi$ such that the image is contained in $X_{\mathbb{R}}^{+}$ and denote by $M_{\pi}$ the $\mathcal{A}$-module $\mathcal{A}\pi$. Let $B_{\pi}$  be the $\mathbb{Z}$-basis  $M_{\pi}\cap\Pi$ of $M_{\pi}$. In other words,  for a finite set~$I$,  we have
\[B_{\pi} \:=\: \big\{\;f_{i_{1}}\ldots f_{i_{r}}(\pi)\;\big|\; \pi\in\Pi^{+}\text{ and } i_{1},\ldots, i_{r}\in I \;\big\}.\]

For a dominant weight $\lambda$,  consider  the path $\pi_{\lambda}(t) = t\lambda$
that connects the origin with $\lambda$ by a straight line.  Denote by $M_{\lambda}$ the $\mathcal{A}$-module $\mathcal{A}\pi_{\lambda}$ generated by the path $\pi_{\lambda}$. In addition, the $\mathbb{Z}$-module $M_{\lambda}$ has for a basis the set $B_{\pi_{\lambda}}$ consisting of all paths in $M_{\lambda}$.

Given two paths $\pi$ and  $\pi^{'}$ in $\Pi^{+}$, if $\pi(1)=\pi^{'}(1)$, then the $\mathcal{A}$-modules $M_{\pi}$ and $M_{\pi^{'}}$ are isomorphic. 
For $\pi$ in $\Pi^{+}$, let $\eta$ in $M_{\pi}$ be an arbitrary path. The minimum of the function $h_{\alpha}(t) = \langle \pi(t),\alpha^{\vee} \rangle$ is an integer for all simple roots. In addition,   we have $e_{\alpha}(\eta) = 0$  if and only if $\eta = \pi$.

\begin{example}
 \label{rootoperatorexample}
Let $\mathfrak{g}=\mathfrak{sl}_{3}$ and let $\alpha_{1},\alpha_{2}$ be its simple roots and $\omega_1,\omega_2$ be its fundamental weights. For $\lambda =\omega_{1}+\omega_{2}$, consider the path $\pi_{\lambda}:t\mapsto t\lambda$. Let us compute $B_{\pi_{\lambda}}$. We have 
\begin{center}
\begin{tikzpicture}[scale=1.5]
\draw[-, dotted] (-2,0) --(2,0);
\draw[-, dotted] (-2,0.86) --(2,0.86);
\draw[-, dotted] (-2,-0.86) --(2,-0.86);
\draw[->, dotted,] (0,0) --(0.5, 0.86602540378);
\node[right] at (0.5, 0.86602540378){\tiny{$\omega_{1}$}};
\draw[->,dotted] (0,0) --(-1, 0);
\draw[-,dotted] (1,-1.73205080756) --(-1, 1.73205080756);
\draw[-,dotted] (2,-1.73205080756) --(0, 1.73205080756);
\draw[-,dotted] (0,-1.73205080756) --(-2, 1.73205080756);
\node[left] at (-1, 0){\tiny{$\varepsilon_{2}$}};
\draw[->, dotted] (0,0) --(0.5, -0.86602540378);
\draw[-, dotted] (1,1.73205080756) --(-1, -1.73205080756);
\draw[-, dotted] (2,1.73205080756) --(0, -1.73205080756);
\draw[-, dotted] (0,1.73205080756) --(-2, -1.73205080756);
\node[below] at (0.5, -0.86602540378) {\tiny{$\varepsilon_{3}$}};
\draw[->, blue,very thick] (0,0) -- (1.5,0.86602540378);
\node[above] at (1.5,0.86602540378) {\tiny{\textcolor{blue}{$f_{\alpha_{2}}(\pi_{\lambda}$)}}};
\node[right] at (1.5,0.86602540378) {\tiny{$\alpha_{1}$}};
\draw[-,green,very thick] (0,0)--(0.75,-0.4330127189);
\draw[-,green,very thick](0.75,-0.4330127189)-- (0.76, -0.4);
\draw[->,green,very thick](0.76, -0.4)--(0,0);
\node[right] at (0.4,-0.2330127189) {\tiny{\textcolor{green}{$f_{\alpha_{2}}(f_{\alpha_{1}}(\pi_{\lambda}$))}}};
\draw[->,gray](0,0)--(1.5, -0.86602540378);
\node[right] at (1.5, -0.86602540378) {\tiny{\textcolor{gray}{$f_{\alpha_{2}}(f_{\alpha_{2}}(f_{\alpha_{1}}(\pi_{\lambda}$)))}}};
\draw[-,cyan,very thick] (0,0)--(-0.75,-0.4330127189);
\draw[-,cyan,very thick](-0.75,-0.4330127189)-- (-0.76, -0.4);
\draw[->,cyan,very thick](-0.76, -0.4)--(0,0);
\node[left] at (-0.4,-0.2330127189)  {\tiny{\textcolor{cyan}{$f_{\alpha_{1}}(f_{\alpha_{2}}(\pi_{\lambda}$))}}};
\draw[->,magenta](0,0)--(-1.5, -0.86602540378);
\node[left] at (-1.5, -0.86602540378) {\tiny{\textcolor{magenta}{$f_{\alpha_{1}}(f_{\alpha_{1}}(f_{\alpha_{2}}(\pi_{\lambda}$)))}}};
\draw[->, black,very thick] (0,0) -- (0, 1.73205080756);
\node[above] at (0, 1.73205080756) {\tiny{$\lambda$}}; 
\node[right] at (0, 1) {\tiny{$\pi_{\lambda}$}};
\node[left] at (-0.5, 0.86602540378) {\tiny{$\omega_{2}$}};
\draw[->, red,very thick] (0,0) -- (-1.5,0.86602540378);
\node[above] at (-1.5,0.86602540378 ) {\tiny{\textcolor{red}{$f_{\alpha_{1}}$($\pi_{\lambda}$)}}};
\node[left] at (-1.5,0.86602540378) {\tiny{$\alpha_{2}$}};
\draw[->, black,very thick] (0,0) -- (0, -1.73205080756);
\node[above] at (0, -1.73205080756) {\tiny{$f_{\alpha_{2}}(f_{\alpha_{1}}(f_{\alpha_{1}}(f_{\alpha_{2}}(\pi_{\lambda}$))))}};
\fill[red!20!, opacity = 0.5] (0,0) -- (1,1.73205080756) -- (-1,1.73205080756) -- cycle;
\end{tikzpicture}
\end{center}
where
\[\begin{array}{rl}
\pi_{\lambda}:&t\to t\lambda\\
f_{\alpha_1}(\pi_{\lambda})=:\pi_{2}:&t\to t\alpha_{2}\\
f_{\alpha_2}(\pi_{\lambda})=:\pi_{3}:&t\to t\alpha_{1}\\
f_{\alpha_2}(\pi_2)=:\pi_{4}:&t\to \left \{
\begin{array}{ll}
-t\alpha_{2}  &  \text{ for } 0\leq t\leq \frac{1}{2} \\
(t-1)\alpha_{2}&   \text{ for }\frac{1}{2}\leq t\leq 1 \\
\end{array}
 \right. \\
f_{\alpha_1}(\pi_3)=:\pi_{5}:&t\to \left \{
\begin{array}{ll}
-t\alpha_{1}  &  \text{ for } 0\leq t\leq \frac{1}{2} \\
(t-1)\alpha_{1}&   \text{ for }\frac{1}{2}\leq t\leq 1 \\
\end{array}
 \right. \\
f_{\alpha_2}(\pi_4)=:\pi_{6}:&t\to -t\alpha_{2}\\
f_{\alpha_1}(\pi_5)=:\pi_{7}:&t\to -t\alpha_{1}\\
f_{\alpha_2}(\pi_7)=f_{\alpha_1}(\pi_6)=:\pi_{8}:&t\to -t\lambda
\end{array}
\]
Hence we obtain
$B_{\pi_{\lambda}} = \{ \pi_{\lambda}, \pi_{2}, \pi_{3}, \pi_{4}, \pi_{5}, \pi_{6}, \pi_{7}, \pi_{8}\}$.
\end{example}

\subsubsection{Crystal graphs}
For $\pi$ in $\Pi^{+}$, let $\mathcal{G}(\pi)$ be the oriented  graph with set of vertices  $B_{\pi}$  and an arrow $\pi \overset{i}{\rightarrow} \pi'$ means that $f_{\alpha_i}(\pi) = \pi'$ and $e_{\alpha_i}(\pi') = \pi$. The graph $\mathcal{G}(\pi)$ is also called a \emph{crystal graph},~see~\cite{Kashiwara94}.

For example, consider the semisimple Lie algebra $\mathfrak{g}=\mathfrak{sl}_{3}$. For $\lambda =\omega_{1}+\omega_{2}$, the elements of $B_{\pi_\lambda}$  obtained in~Example~\ref{rootoperatorexample} can be represented in the following crystal graph $\mathcal{G}(B_{\pi_\lambda})$
\[
\xymatrix@C=1.5em @R=0.7cm{
 && \pi_{\lambda}
 	\ar@<+0.4ex>[dl] _-{\small 1}
 	\ar@<+0.4ex>[dr]^-{\small 2}
 \\
&\pi_{_2}
 	\ar@<+0.4ex>[d]_-{\small 2}
&& \pi_{3}
\ar@<+0.4ex>[d]^-{\small 1}
\\
&\pi_{4}
\ar@<+0.4ex>[d]_-{\small 2}
&& \pi_{5}
\ar@<+0.4ex>[d]^-{\small 1}
\\
&\pi_{6}
\ar@<+0.4ex>[dr]_-{\small 1}
&&\pi_{7}
\ar@<+0.4ex>[dl]^-{\small 2}
\\
&&\pi_{8}
 } 
\]

\subsubsection{ L-S paths} 
For a dominant weight $\lambda$, the \emph{Lakshmibai-Seshadri paths}, \emph{L-S paths} for short,  of shape $\lambda$ are the paths $\pi$ of the form
\[
\pi= f_{\alpha_1}\circ\ldots\circ f_{\alpha_s}(\pi_\lambda)
\]
where $\alpha_1,\ldots,\alpha_s$ are simple roots of a semisimple Lie algebra $\mathfrak{g}$. That is, these paths are all the elements of $B_{\pi_\lambda}$.

\begin{example} 
Consider $\mathfrak{g}=\mathfrak{sl}_{3}$  and  let $\omega_1$ and $\omega_2$ be its fundamental weights.  Let $\lambda = \omega_1+\omega_2$, the L-S paths of shape $\lambda$ are all the elements of $B_{\pi_\lambda}$ as obtained in~Example~\ref{rootoperatorexample}. The L-S paths of shape $\omega_2$ are the elments of $B_{\pi_{\omega_2}}$, with
\[B_{\pi_{\omega_2}} = \{\;\pi_{\epsilon_1+\epsilon_2}, \pi_{\epsilon_1+ \epsilon_3}, \pi_{\epsilon_2+\epsilon_3} \;\}.\]

For the Lie algebra $\mathfrak{sl}_n$, the L-S paths of shape $\omega_1$ are the paths $\pi_{\epsilon_i}$, for $i=1,\ldots,n+1$.
\end{example}

\subsection{Tableaux}
In this section, we recall the definition of standard tableaux as defined by Littelmann in~\cite{Littelmann96}.
Fix an enumeration $\omega_1, \ldots, \omega_n$ of the fundamental weights of a semisimple Lie algebra~$\mathfrak{g}$.

\subsubsection{Young diagram}
A \emph{Young diagram} is a finite collection of boxes, arranged in left-justified rows such that the length of each row is bigger or equal to the one below it. Note that the rows are enumerated from top to bottom and the columns from the right to the left.
For example, the Young diagram with six boxes in the first row, four boxes in the second boxes and one box in the third row  is the following
\[\begin{tabular}[c]{|l|lllll}\hline
&\multicolumn{1}{|l|}{}&\multicolumn{1}{|l|}{}&\multicolumn{1}{|l|}{}&\multicolumn{1}{|l|}{}&\multicolumn{1}{|l|}{}\\\cline{1-6}
&\multicolumn{1}{|l|}{}&\multicolumn{1}{|l|}{}&\multicolumn{1}{|l|}{}&\\\cline{1-4}
&\\\cline{1-1}
\end{tabular}.
\]
\subsubsection{L-S monomials}
 Let $\lambda = \lambda_1+\ldots +\lambda_{k}$, where $\lambda_{1},\ldots,\lambda_{k}$ are dominant  weights. If for all $i=1,\ldots , k$, $\pi_{i}$ is an L-S path of shape $\lambda_{i}$, then the monomial $m=\pi_{1}\star\ldots\star\pi_{k}$ is called an \emph{L-S monomial of shape} $\underline{\lambda} = (\lambda_1,\ldots,\lambda_k)$.

\subsubsection{Young tableau}
A \emph{Young tableau of shape} $\lambda= a_{1}\omega_{1}+\ldots+a_{n}\omega_{n}$ is an L-S monomial 
\[
\underset{\underset{1 \leq j \leq n} {1 \leq i \leq n}}{\bigstar}{\pi_{1,\omega_j}\star\ldots\star\pi_{a_i,\omega_j}}
\]
where $\pi_{i,\omega_j}$ is an L-S path of shape $\omega_j$, for $1 \leq i \leq n$. That is, the first $a_{1}$ paths are of shape $\omega_{1}$, the next $a_{2}$ are of shape $\omega_{2}$,\ldots, the final $a_{n}$ paths are of shape $\omega_{n}$.

\subsubsection{Standard tableau} 
Let $m$ be a Young tableau of shape $\lambda = a_{1}\omega_{1}+\ldots+a_{n}\omega_{n}$. The Young tableau $m$ is called $\emph{ standard of shape}$  $\lambda$ if $m\in\mathcal{A}(\pi_{\omega_{1}}\star \ldots\star \pi_{\omega_{n}})$.

\begin{example}
For type $A_n$, consider the ordered alphabet $\mathcal{A}_{n+1}=\{1<\dots<n+1\}$. For a dominant weight $\lambda= a_{1}\omega_{1}+\ldots+a_{n}\omega_{n}$,  set $p_i = a_i+\ldots+a_n$. Consider the Young diagram with $p_1$ boxes in the first row, $p_2$ boxes in the second row, etc. A Young tableau of shape $\lambda$ and type~$A$ is a filling of the boxes of this Young diagram with elements of the alphabet $\mathcal{A}_{n+1}$ such that the entries are striclty increasing in the column from top to bottom. 

A  standard tableau of shape $\lambda= \omega_{1}+ \ldots+ \omega_{k}$ is a Young tableau of shape $\lambda$ and type~A such that the entries are weakly increasing in the rows from left to right. In other words all the standard tableaux of shape $\lambda$ are the elements of $B_{\pi_{\omega_{1}}\star \ldots\star \pi_{\omega_{k}}}$, where  the L-S monomial $\pi_{\omega_{1}}\star \ldots\star \pi_{\omega_{k}}$ corresponds to the Young tableau with only $1$'s in the first row, $2$'s in the second row,\ldots, $k$'s in the k-th row.
Note that the standard tableaux of type~A are also called \emph{semistandard tableaux} in~\cite{Fulton97}.

For type $A_3$,  the following  Young tableau is standard of shape $ \omega_1+2\omega_2+\omega_3$
\[
\young(1112,233,3).
\]
\end{example}

\begin{example}
For type $C_n$, consider the ordered alphabet $\mathcal{C}_n = \{1<\ldots<n<\overline{n}<\ldots<\overline{2}<\overline{1}\}$. For a dominant weight $\lambda= a_{1}\omega_{1}+\ldots+a_{n}\omega_{n}$, set $p_1= a_1+2a_2+\ldots+2a_n$ and for $i\geq 2$ set $p_i=2a_i+\ldots+2a_n$. Consider the Young diagram with $p_1$ boxes in the first row, $p_2$ boxes in the second row, etc. A Young tableau of shape $\lambda$ and type~$C$ is a filling of the boxes of this Young diagram with elements of the alphabet $\mathcal{C}_{n}$ such that the entries are striclty increasing in the column from top to bottom, but $i$ and $\overline{i}$ are never entries in the same column. In addition, for each pair of columns $(C_{a_1+2j-1},C_{a_1+2j})$, $j=1,\ldots, a_2+\ldots+a_{n}$, either these columns are equal or the column $C_{a_1+2j}$ is obtained from $C_{a_1+2j-1}$ by exchanging an even number of times an entry $k$, $1\leq k\leq \overline{1}$, in $C_{a_1+2j-1}$ by $\overline{k}$, see~{\cite[Section~4.2]{GaussentLittelmann12}}.

A standard tableau of shape $\lambda= \omega_{1}+ \ldots+ \omega_{k}$ is a Young tableau of shape $\lambda$ and type~C  such that the entries are weakly increasing in the rows from left to right. Note that the standard tableaux of type~C are also called \emph{ symplectic tableaux} in~\cite{Lecouvey02}.

For type~$C_3$, the following Young tableau is a standard tableau of shape $\omega_1+2\omega_2$
\[\begin{tabular}[c]{|l|l|l|l|l}\hline
$\mathtt{1}$&$\mathtt{1}$&$\mathtt{1}$&$\mathtt{2}$&\multicolumn{1}{|l|}{$\mathtt{\overline{3}}$}\\\cline{1-5}
$\mathtt{2}$&$\mathtt{2}$&$\mathtt{\overline{3}}$&$\mathtt{\overline{3}}$\\\cline{1-4}
$\mathtt{3}$&$\mathtt{3}$&$\mathtt{\overline{2}}$&$\mathtt{\overline{1}}$\\\cline{1-4}
\end{tabular}.\]
\end{example}

\subsection{Plactic algebra for any semisimple Lie algebra}
Let us recall the definition of the plactic algebra for any semisimple Lie algebra and we refer the reader to~\cite{Littelmann96} for more details.

Let $\mathbb{Z}\Pi_{0}$  be the $\mathcal{A}$-submodule $\mathcal{A}\Pi^{+}$ of $\mathbb{Z}\Pi$ generated by the paths in $\Pi^{+}$.
For two paths $\pi_{1}$ and $\pi_{2}$ in $\mathbb{Z}\Pi_{0}$, denote by $\pi_{1}^{+}$ and $\pi_{2}^{+}$ the unique paths in $\Pi^{+}$ such that $\pi_1\in M_{\pi_{1}^{+}}$ and $\pi_{2}\in M_{\pi_{2}^{+}}$. One can define a relation $\sim$ on $\mathbb{Z}\Pi_{0}$ by : $\pi_{1}\sim \pi_{2}$ if, and only if, \textrm{wt}($\pi_{1}^{+}$) = \textrm{wt}($\pi_{2}^{+}$) and 
$\psi(\pi_{1})= \pi_{2}$ under the isomorphism $\psi:M_{\pi_{1}^{+}}\to M_{\pi_{2}^{+}}$. 
The \emph{plactic algebra} for $\mathfrak{g}$ is the quotient 
\[\mathbb{Z}\mathcal{P}:= \mathbb{Z}\Pi_{0}/\sim.\]
For $\pi\in \mathbb{Z}\Pi_{0}$, we denote by $[\pi]\in \mathbb{Z}\mathcal{P}$ its equivalence class. The classes $[m]$ of standard Young tableaux form a basis of the plactic algebra $\mathbb{Z}\mathcal{P}$, see~{\cite[Theorem~7.1]{Littelmann96}}.

\begin{example}[{\cite[Theorem~C]{Littelmann96}}]
For type A, consider the alphabet $\{1,\ldots, n\}$. The plactic congruence coincides with the congruence generated by the following families of relations on the word algebra $\mathbb{Z}\{1,\ldots, n\}$ :
\begin{enumerate}[(a)]
\item  $xzy = zxy$ for  $1\leq x<y\leq z\leq n$.
\item  $yxz = yzx$ for  $1\leq x\leq y < z \leq n$.
\item $12\ldots n$  is the trivial word.  
\end{enumerate}
\end{example}

\section{Finite convergent presentation of plactic monoids}
\label{finiteconvergentpresentation}

\subsection{Rewriting properties of $2$-polygraphs}
In this section, we recall some rewriting properties of the presentations of monoids. These presentations are studied in terms of polygraphs. For more informations, we refer the reader to~\cite{GuiraudMalbos14}.  

A \emph{ $2$-polygraph } (with only one $0$-cell) is a pair  $\Sigma=(\Sigma_{1},\Sigma_{2})$ 
where $\Sigma_{1}$  is the set  of  generating  $1$-cells  and $\Sigma_{2}$ is the set of  generating $2$-cells, that is  rules of the form $u\dfl v$, with $u$ and $v$ are words in the free monoid $\Sigma_{1}^{\ast}$.
A monoid $M$ is presented by a $2$-polygraph $\Sigma$ if $M$ is isomorphic to the quotient of the free monoid $\Sigma_{1}^{\ast}$ by the congruence generated by $\Sigma_{2}$.  
A $2$-polygraph $\Sigma$ is \emph{finite} if $\Sigma_{1}$ and $\Sigma_{2}$ are finite.   A \emph{ rewriting step } of $\Sigma$ is a $2$-cell of the form $w\phi w':wuw'\odfl{}wvw'$,  where $\phi:u\odfl{}v$ is a $2$-cell in $\Sigma_{2}$ and $w$ and $w'$ are words of $\Sigma_{1}^{*}$. A \emph{rewriting sequence} of $\Sigma$ is a finite or infinite sequence of rewriting steps. We say that $u$ rewrites into $v$ if $\Sigma$ has a  nonempty rewriting sequence from $u$ to $v$. A word of $\Sigma_{1}^{\ast}$ is a \emph{ normal form } if $\Sigma$ has no rewriting step with source $u$. A  normal form  of $u$ is a word $v$ of $\Sigma_{1}^{\ast}$ that is a normal form and such that $u$ rewrites into $v$. We say that $\Sigma$ \emph{ terminates } if it has no infinite rewriting sequences.
We say that $\Sigma$ is \emph{ confluent }  if for any words $u$, $u'$ and $u"$ of $\Sigma_{1}^{\ast}$, such that $u$ rewrites into $u'$ and $u"$, there exists a word $v$ in $\Sigma_{1}^{\ast}$ such that $u'$ and $u"$ rewrite into $v$. 
We say that $\Sigma$ is \emph{convergent} if it terminates and it is confluent. Note that a terminating $2$-polygraph is convergent if every word admits a unique normal form.

\subsection{Column presentation}
Let $\mathfrak{g}$ be a semisimple Lie algebra and let $\omega_{1},\ldots,\omega_{n}$ be its fundamental weights.  Let $\mathbb{B}_{i}$ be the set of L-S paths of shape $\omega_{i}$ and $\mathbb{B}=\cup_{i=1}^{n}\mathbb{B}_{i}$. 
Denote by $\mathbb{B}^{\ast}$ the free monoid over $\mathbb{B}$. A word on~$\mathbb{B}^{\ast}$ is a concatenation of L-S paths.

For every L-S paths $c_{1}$ and $c_{2}$ in $\mathbb{B}$ such that $c_{1}\star c_{2}$ is not a standard tableau, we define a $2$-cell
\[
 c_{1}\star c_{2} \odfl{\gamma_{c_1,c_2}} T 
\]
where $T$ is the unique standard tableau such that $[T] = [c_{1}\star c_{2}]$.
 
Let denote by $\Path(n)$ the $2$-polygraph whose set of $1$-cells is $\mathbb{B}$ and the set of $2$-cells is 
\[
\Path_2(n)
\: = \:
\big\{ \; c_{1}\star c_{2} \odfl{\gamma_{c_1,c_2}} T \; \big| \; c_1,c_2\in\mathbb{B}\;\text{and}\; \; c_{1}\star c_{2} \text{ is not a standard tableau } \big\}.
\]
This presentation is called the \emph{column presentation}. It is a presentation of the plactic monoid for any semisimple Lie algebra~{\cite[Theorem~B]{Littelmann96}}. Let us prove that it is terminating and confluent.

\subsubsection{Order on the tableaux}
Consider the partial order $\leqslant$ of dominant weights defined by 
\[\lambda_1 \leqslant \lambda_2 \qquad \text{ if and only if }\qquad \lambda_1-\lambda_2\in \mathbb{N}\Phi^{+}\]
where $\lambda_1$ and $\lambda_2$ are dominant weights. That is, $\lambda_1-\lambda_2$ is a nonnegative integral sum of positive roots. Using this order, one  can find for each dominant weight, a finite number of dominant weights that are smaller than it,  then the partial order $\leqslant$ is a well-founded order.

Let us define an order $\preceq$ on the tableaux of $\mathbb{B}^{\ast}$ as follows. For two tableaux $m$ and $m'$ of shape $\lambda$ and $\lambda'$ respectively, we have
\[
 m \preceq m' \qquad \text{if and only if }\qquad \lambda \leqslant \lambda'.
\]

\begin{remark}
\label{mainremark}
In the proof of Theorem~B in~\cite{Littelmann96}, Littelmann showed  that for every tableau $m$ of shape $\lambda$ that it is not standard, the shape of the standard tableau $m'$ such that $[m]=[m']$ is strictly smaller than $\lambda$. 
\end{remark}

\begin{lemma}
\label{terminationLemma}
The $2$-polygraph $\Path(n)$ is terminating.
\end{lemma}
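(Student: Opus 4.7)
I will prove termination by exhibiting a single well-founded quantity on $\mathbb{B}^{\ast}$ that is strictly decreased by every rewriting step of $\Path(n)$.

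For a word $w = c_1 \star \cdots \star c_k \in \mathbb{B}^{\ast}$ in which each $c_j$ is an L-S path of shape $\omega_{i_j}\in\{\omega_1,\dots,\omega_n\}$, define the \emph{total shape}
\[
\Sigma(w) \;:=\; \omega_{i_1} + \omega_{i_2} + \cdots + \omega_{i_k} \;\in\; X^{+}.
\]
Being a sum of fundamental weights, $\Sigma(w)$ is a dominant weight, and the order $\leqslant$ on $X^{+}$ is well-founded as recalled just before the Remark above. I order $\mathbb{B}^{\ast}$ by the induced order on $\Sigma$.

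A rewriting step has the form $u\star c_1\star c_2\star v \;\to\; u\star T\star v$, where $c_1\star c_2$ is not a standard tableau and $T$ is the unique standard tableau with $[T] = [c_1\star c_2]$. Writing $T = d_1 \star \cdots \star d_\ell$ with $d_m$ of shape $\omega_{k_m}$, we have $\mathrm{sh}(T) = \omega_{k_1}+\cdots+\omega_{k_\ell}$. The Remark applied to the non-standard tableau $c_1\star c_2$ of shape $\omega_{i_1}+\omega_{i_2}$ yields $\mathrm{sh}(T) < \omega_{i_1}+\omega_{i_2}$ strictly in $\leqslant$. Since
\[
\Sigma(u\star T\star v) \;=\; \Sigma(u) + \mathrm{sh}(T) + \Sigma(v), \qquad \Sigma(u\star c_1\star c_2\star v) \;=\; \Sigma(u) + (\omega_{i_1}+\omega_{i_2}) + \Sigma(v),
\]
the difference is $\mathrm{sh}(T)-(\omega_{i_1}+\omega_{i_2})$, which is strictly negative in $\leqslant$. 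Hence $\Sigma$ strictly decreases at every rewriting step.

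Because $\leqslant$ is well-founded on $X^{+}$, no infinite strictly decreasing chain of dominant weights exists. An infinite rewriting sequence $w_0 \to w_1 \to \cdots$ would induce an infinite strictly decreasing chain $\Sigma(w_0) > \Sigma(w_1) > \cdots$ in $X^{+}$, a contradiction. Therefore $\Path(n)$ terminates.

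\textbf{Main obstacle.} The substantive ingredient is the Remark itself, which is inherited from Littelmann's proof of Theorem~B and is the only nontrivial input. Beyond invoking it, the one small subtlety is that a rewriting step can change the \emph{number} of L-S path components of the word—two are replaced by $\ell$—but since $\Sigma$ only reads the multiset of fundamental-weight indices, the net effect on $\Sigma$ reduces exactly to $\mathrm{sh}(T)-(\omega_{i_1}+\omega_{i_2})$, which the Remark controls.
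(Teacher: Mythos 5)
There is a genuine gap: your key inequality $\mathrm{sh}(T) < \omega_{i_1}+\omega_{i_2}$ does not hold for every $2$-cell of $\Path(n)$, because Remark~\ref{mainremark} does not apply to every source $c_1\star c_2$. The Remark is about a \emph{Young tableau} that fails to be standard, and by definition a Young tableau must list its constituent L-S paths with the $\omega_1$-shaped ones first, then the $\omega_2$-shaped ones, and so on. A product $c_1\star c_2$ with $c_1$ of shape $\omega_{i_1}$, $c_2$ of shape $\omega_{i_2}$ and $i_1>i_2$ is not a Young tableau at all; it is therefore the source of a $2$-cell, but the Remark says nothing about it, and the conclusion you draw from it is false there. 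Concretely, in type $A_2$ take $c_1=\pi_{\omega_2}$ (the column with entries $1,2$) and $c_2=\pi_{\omega_1}$ (the box $1$). The path $c_1\star c_2$ stays in the dominant chamber and has weight $\omega_1+\omega_2$, so it is the highest-weight element of its component and the unique standard tableau in its class is $T=\pi_{\omega_1}\star\pi_{\omega_2}$, of shape exactly $\omega_1+\omega_2=\omega_{i_1}+\omega_{i_2}$. On the rewriting step $c_1\star c_2\Rightarrow T$ your measure $\Sigma$ is unchanged, so the claim that $\Sigma$ strictly decreases at every step fails, and such shape-preserving ``reordering'' steps are a genuine feature of column presentations already in type~A.

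The paper circumvents this by a case distinction: when the source is a non-standard Young tableau it argues as you do via the Remark, but when the source is an L-S monomial that is not a tableau it measures it instead by $k\omega_1$, the shape obtained after decomposing every letter into $\omega_1$-paths, which strictly dominates the shape of the resulting standard tableau. To repair your argument you would need either to adopt such a two-valued measure, or to keep $\Sigma$ (which is only weakly decreasing) and add a secondary well-founded order that strictly decreases on the $\Sigma$-preserving steps, for instance one comparing the sequences of shapes $(\omega_{i_1},\dots,\omega_{i_k})$ of the letters. Your treatment of the context (additivity of $\Sigma$ over $u$ and $v$ and translation-invariance of $\leqslant$) is fine, and is in fact more explicit than the paper's, but the central inequality it rests on is not available in the stated generality.
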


\begin{proof}
Let us show that the $2$-polygraph $\Path(n)$ is compatible with the order~$\preceq$.
We have to prove that  if  $h \odfl{ } h'$, then $h'\prec h$, where $h$ is  not a sandard tableau and $h'$ is a standard tableau of shape~$\lambda'$. There is two cases depending on whether or not $h$ is a tableau. 

\noindent Suppose that $h$ is a tableau of shape $\lambda$ that it is not standard, then by Remark~\ref{mainremark} we have $\lambda'<\lambda$. Thus we obtain that $h'\prec h$.

\noindent Suppose that $h$ is an L-S monomial of shape $\underline{\lambda}$ that it is not a tableau. By decomposing each L-S path of shape $\omega_i$, for $i=1,\ldots,n$, in $h$ into concatenation of L-S paths of shape $\omega_1$,  we transform the L-S monomial $h$ into a tableau of shape $k\omega_1$, with $k\in\mathbb{N}$. Again by Remark~\ref{mainremark}, we have $\lambda'< k\omega_1$. Thus we obtain that $h'\prec h$.

Hence, rewriting an L-S monomial that is not a standard tableau always decreases it with repsect to the order $\preceq$.  Since every application of a $2$-cell of $\Path(n)$ yields a $\prec$-preceding tableau, it follows that any sequence of rewriting using $\Path(n)$ must terminate.
\end{proof}

\begin{lemma}
\label{ConfluenceLemma}
The $2$-polygraph $\Path(n)$ is confluent.
\end{lemma}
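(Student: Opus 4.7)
The plan is to apply the remark at the end of Section~\ref{finiteconvergentpresentation}: for a terminating $2$-polygraph, convergence is equivalent to uniqueness of normal forms. Since termination was just established in Lemma~\ref{terminationLemma}, it suffices to show that every word $m\in\mathbb{B}^{\ast}$ admits a unique normal form under $\Path(n)$.

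First I would characterize the normal forms. By definition, an L-S monomial $m = c_1\star\ldots\star c_k$ admits no rewriting step of $\Path(n)$ if and only if every adjacent pair $c_i\star c_{i+1}$ is already a standard tableau. I then claim that this local condition forces $m$ itself to be a standard tableau, i.e.\ that $m\in\mathcal{A}(\pi_{\omega_1}\star\ldots\star\pi_{\omega_n})$. In type~A this amounts to the classical observation that a concatenation of columns is semistandard if and only if each consecutive pair of columns is semistandard; the analogous criteria for types~B,~C,~D and $G_2$ follow from the column descriptions recalled in Section~\ref{placticalgebra}; and in full generality it is inherited from the combinatorial description of the $\mathcal{A}$-module generated by $\pi_{\omega_1}\star\ldots\star\pi_{\omega_n}$ in~\cite{Littelmann96}.

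Once the normal forms are identified as the standard tableaux, two further observations finish the argument. By construction each $2$-cell $\gamma_{c_1,c_2}\colon c_1\star c_2 \dfl T$ satisfies $[T]=[c_1\star c_2]$, so any rewriting sequence preserves the plactic equivalence class. On the other hand, Theorem~7.1 of~\cite{Littelmann96} asserts that the classes $[T]$ of standard tableaux form a $\mathbb{Z}$-basis of $\mathbb{Z}\mathcal{P}$; in particular each plactic class contains at most one standard tableau. Consequently, if $m$ reduces to normal forms $T_1$ and $T_2$, then by the previous step both are standard tableaux, both lie in the single plactic class $[m]$, and hence $T_1=T_2$. Termination together with this uniqueness of normal forms gives convergence, as required.

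The main obstacle is the first step: to prove that the local condition (every adjacent pair of L-S paths being a standard tableau) upgrades to standardness of the whole monomial. In the concrete types this is transparent from the row-monotonicity criterion recalled in Section~\ref{placticalgebra}, but in Littelmann's general path model it requires unfolding the description of $\mathcal{A}(\pi_{\omega_1}\star\ldots\star\pi_{\omega_n})$ and the behaviour of the root operators $e_{\alpha}, f_{\alpha}$ on concatenations, so that the $k$-fold property reduces to a series of checks on neighbouring L-S paths. Everything else, namely invoking Littelmann's basis theorem and combining termination with unique normal forms, is essentially formal.
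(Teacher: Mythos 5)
Your argument is essentially the paper's: both reduce confluence, given the termination established in Lemma~\ref{terminationLemma}, to uniqueness of normal forms, identify the normal forms of $\Path(n)$ with standard tableaux, and invoke Littelmann's result that standard tableaux form a cross-section of the plactic classes to conclude that two normal forms of the same word must coincide. The only difference is that you make explicit the local-to-global step (every adjacent pair $c_i\star c_{i+1}$ standard implies the whole monomial standard), which the paper passes over silently when it writes a normal form as a standard Young tableau; that step is indeed the point needing Littelmann's analysis, and your flagging of it is accurate rather than a defect.
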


\begin{proof}
Let $m\in \mathbb{B}^{*}$ and  $T$, $T'$ be two normal forms such that  $m\odfl{} T$ and $m\odfl{} T'$. It is sufficient to prove that $T=T'$. Suppose $T=c_1\star\ldots\star c_k$, where the L-S monomial $c_1\star\ldots\star c_k$ is a standard  Young tableau such that $[m] = [c_1\star\ldots\star c_k]$.
Similarly, $T' = c'_{1}\star\ldots\star c'_{l}$  where the L-S monomial $c'_{1}\star\ldots\star c'_{l}$ is a standard  Young tableau such that
$[m] = [c'_1\star\ldots\star c'_l]$.
Since $[m] = [c_1\star\ldots\star c_k] =  [c'_1\star\ldots\star c'_l]$ and the standard tableaux form a cross-section of the plactic monoid for a semisimple Lie algebra $\mathfrak{g}$, we have $k=l$ and $c_{i}=c'_{i}$, for all $i=1,\ldots, k.$ Thus $T=T'$. Since the $2$-polygraph $\Path(n)$ is terminating, and rewriting any non-standard tableau must terminate with a unique normal form, $\Path(n)$ is confluent.
\end{proof}

Every semisimple Lie algebra $\mathfrak{g}$ admits a finite number of fundamental weights, then there is a finite number of L-S paths of shape~$\omega_i$, for $i=1,\ldots,n$. Thus the $2$-polygraph $\Path(n)$ is finite. Hence, by Lemmas~\ref{terminationLemma} and~\ref{ConfluenceLemma}, we obtain the following theorem.

\begin{theorem}
\label{maintheorem}
For any semisimple Lie algebra $\mathfrak{g}$,  the $2$-polygraph $\Path(n)$ is a finite convergent presentation of the plactic monoid for $\mathfrak{g}$.
\end{theorem}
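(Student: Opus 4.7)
The plan is to assemble into a single statement the three ingredients that have already been prepared: that $\Path(n)$ presents the plactic monoid, that it is finite, and that it is convergent. Since all the conceptual work is done, my proof is essentially a checklist invoking prior results.

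First I would cite Theorem~B of~\cite{Littelmann96} to conclude that $\Path(n)$ is indeed a presentation of the plactic monoid for $\mathfrak{g}$: the $1$-cells are the L-S paths of shape $\omega_i$ for $i=1,\ldots,n$, and every $2$-cell $c_1\star c_2 \odfl{} T$ equates a non-standard concatenation of two L-S paths with the unique standard tableau in its plactic class, which matches the congruence defining $\mathbb{Z}\mathcal{P}$. Next I would verify finiteness. Since $\mathfrak{g}$ is semisimple, its set $\{\omega_1,\ldots,\omega_n\}$ of fundamental weights is finite. For each $i$, the set $\mathbb{B}_i$ of L-S paths of shape $\omega_i$ equals $B_{\pi_{\omega_i}}$, which is a $\mathbb{Z}$-basis of the $\mathcal{A}$-module $M_{\pi_{\omega_i}}$ and corresponds to the weight basis of the finite-dimensional irreducible $\mathfrak{g}$-representation of highest weight $\omega_i$; hence $\mathbb{B}_i$ is finite, and so is $\mathbb{B} = \bigcup_{i=1}^n \mathbb{B}_i$. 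As $\Path_2(n) \subseteq \mathbb{B}\times\mathbb{B}$, the set of $2$-cells is finite as well.

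Convergence is then immediate from combining Lemma~\ref{terminationLemma} (termination with respect to the well-founded order $\preceq$ on shapes) and Lemma~\ref{ConfluenceLemma} (confluence via the cross-section property of standard tableaux in $\mathbb{Z}\mathcal{P}$). I do not expect any significant obstacle: the only point where one might be tempted to argue more carefully is the finiteness of $\mathbb{B}_i$, but this follows from the standard fact that L-S paths of shape $\omega_i$ parametrize the weight basis of the fundamental representation $V(\omega_i)$, which is finite-dimensional. Assembling these three bullets completes the proof.
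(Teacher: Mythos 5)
Your proposal is correct and follows essentially the same route as the paper: finiteness of $\mathbb{B}$ from the finiteness of the fundamental weights and of each set of L-S paths of shape $\omega_i$, the presentation property from Theorem~B of Littelmann, and convergence by combining Lemma~\ref{terminationLemma} with Lemma~\ref{ConfluenceLemma}. Your extra justification that $\mathbb{B}_i$ is finite because L-S paths of shape $\omega_i$ index a basis of the finite-dimensional representation $V(\omega_i)$ is a welcome precision that the paper leaves implicit.
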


\subsection{Finiteness properties of plactic monoids}
A monoid is of \emph{finite derivation type} ($\mathrm{FDT}_3$) if it admits a finite presentation whose relations among the relations are finitely generated, see~\cite{Squier94}. The property $\mathrm{FDT}_3$ is a natural extension of the properties of being finitely generated $(\mathrm{FDT}_1)$ and finitely presented $(\mathrm{FDT}_2)$. Using the notion of polygraphic resolution,  one can define the higher-dimensional finite derivation type properties $\mathrm{FDT}_{\infty}$, see~\cite{GuiraudMalbos12advances}. They generalise in any dimension the finite derivation type $\mathrm{FDT}_{3}$.  A monoid is said to be $\mathrm{FDT}_{\infty}$ if it admits a finite polygraphic resolution. By Corollary~4.5.4 in~\cite{GuiraudMalbos12advances}, a monoid with a finite convergent presentation is $\mathrm{FDT}_{\infty}$. Then by Theorem~\ref{maintheorem}, we have

\begin{proposition}
For any semisimple Lie algebra $\mathfrak{g}$ , plactic monoids for $\mathfrak{g}$  satisfy the homotopical finiteness condition $\mathrm{FDT}_{\infty}$.
\end{proposition}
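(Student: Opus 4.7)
The statement follows directly by chaining two results that are already in hand, so the plan is short and there is essentially no obstacle to overcome. The strategy will be to invoke Theorem~\ref{maintheorem} to produce a finite convergent presentation of the plactic monoid, and then to feed this into the general homotopical criterion of Guiraud--Malbos that promotes finite convergent presentations to the $\mathrm{FDT}_{\infty}$ property.

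More precisely, I would first recall that a semisimple Lie algebra $\mathfrak{g}$ has only finitely many fundamental weights $\omega_{1},\ldots,\omega_{n}$, and for each $i$ the set $\mathbb{B}_{i}$ of L-S paths of shape $\omega_{i}$ is finite; hence the generating set $\mathbb{B}$ of $\Path(n)$ is finite, and by Theorem~\ref{maintheorem} the $2$-polygraph $\Path(n)$ is a finite convergent presentation of the plactic monoid for $\mathfrak{g}$. Then I would apply Corollary~4.5.4 of~\cite{GuiraudMalbos12advances}, which states that any monoid admitting a finite convergent presentation admits a finite polygraphic resolution, i.e.\ is $\mathrm{FDT}_{\infty}$.

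The only substantive point worth mentioning in the write-up is that Corollary~4.5.4 of~\cite{GuiraudMalbos12advances} is invoked as a black box: it builds the higher-dimensional cells of the resolution inductively from the critical branchings of a finite convergent presentation, and produces finitely many cells in each dimension. Since $\Path(n)$ already satisfies the hypothesis, this step is formal and requires no further argument specific to the plactic case. Concluding the proof is therefore just the composition "finite convergent presentation $\Rightarrow$ $\mathrm{FDT}_{\infty}$".
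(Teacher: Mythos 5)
Your proposal is correct and matches the paper's argument exactly: the paper likewise combines Theorem~\ref{maintheorem} with Corollary~4.5.4 of~\cite{GuiraudMalbos12advances}, which states that a monoid with a finite convergent presentation is $\mathrm{FDT}_{\infty}$. Nothing further is needed.
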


In the homological way, a monoid $M$ is of \emph{homological type} $\mathrm{FP}_{\infty}$ when there exists a resolution of $M$ by projective, finitely generated $\mathbb{Z}M$-modules. By Corollary~5.4.4 in~\cite{GuiraudMalbos12advances} the property $\mathrm{FDT}_{\infty}$ implies the property $\mathrm{FP}_{\infty}$. Hence we have

\begin{proposition}
For any semisimple Lie algebra $\mathfrak{g}$ , plactic monoids for $\mathfrak{g}$ satisfy the homological finiteness property type $\mathrm{FP}_{\infty}$.
\end{proposition}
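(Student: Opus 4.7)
The plan is to chain together the two results that have just been recalled or established. Namely, I would first invoke Theorem~\ref{maintheorem}, which provides, for any semisimple Lie algebra $\mathfrak{g}$, a finite convergent presentation of the associated plactic monoid by the $2$-polygraph $\Path(n)$. This is the only structural input about the monoid that is needed; everything else is a transfer of finiteness properties along known implications.

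Next, I would apply the immediately preceding proposition, which asserts that the existence of a finite convergent presentation forces the monoid to satisfy $\mathrm{FDT}_{\infty}$ (this is itself obtained by citing Corollary~4.5.4 of~\cite{GuiraudMalbos12advances}, which associates to a finite convergent presentation a finite polygraphic resolution). This step is purely a citation: no new construction on the plactic side is required, since we already have the finite convergent presentation in hand.

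Finally, I would invoke Corollary~5.4.4 of~\cite{GuiraudMalbos12advances}, which states that the homotopical property $\mathrm{FDT}_{\infty}$ implies the homological property $\mathrm{FP}_{\infty}$. Combining this with the $\mathrm{FDT}_{\infty}$ property established in the previous paragraph yields the conclusion.

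There is no real obstacle here: the substantive mathematical work has already been done in proving Theorem~\ref{maintheorem} (finiteness, termination via the order $\preceq$, and confluence via cross-section of standard tableaux) and in the general polygraphic machinery of~\cite{GuiraudMalbos12advances}. The present proposition is a formal corollary, and the proof amounts to a two-line chain of implications: finite convergent presentation $\Rightarrow \mathrm{FDT}_{\infty} \Rightarrow \mathrm{FP}_{\infty}$.
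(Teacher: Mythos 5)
Your proposal is correct and follows exactly the paper's route: Theorem~\ref{maintheorem} gives the finite convergent presentation, Corollary~4.5.4 of~\cite{GuiraudMalbos12advances} yields $\mathrm{FDT}_{\infty}$ (the preceding proposition), and Corollary~5.4.4 of the same reference converts this into $\mathrm{FP}_{\infty}$. Nothing further is needed.
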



\begin{small}
\renewcommand{\refname}{\Large\textsc{References}}
\bibliographystyle{alpha}
\bibliographystyle{plain}
\bibliography{biblioLittelmannPaths}
\end{small}
\vfill

\noindent \textsc{Nohra Hage}\\
\begin{small}
Universit\'e de Lyon,\\
Institut Camille Jordan, CNRS UMR 5208\\
Universit\'e Jean Monnet\\
23, boulevard du docteur Paul Michelon,\\
42000 Saint-Etienne cedex, France.\\
\textsf{ nohra.hage@univ-st-etienne.fr}
\end{small}

\end{document}